\begin{document}
\title{Special tight closure}
\author{ Craig Huneke and Adela Vraciu}
\address {Department of Mathematics, University of Kansas, Lawrence,
KS 66045}
\email{huneke@math.ukans.edu; avraciu@math.ukans.edu}
\subjclass{13A35}
\date{February 14, 2001}
\thanks{The first author was partially supported by the National
Science Foundation.}
\begin{abstract} We study the notion of special tight closure of an
ideal and show that it can be used as a tool for tight closure computations.
\end{abstract}
\maketitle

\maketitle
\newcommand{\Hom}{\operatorname{Hom}}
\newcommand{\Ann}{\operatorname{Ann}}
\newcommand{\gr }{\operatorname{gr}}

\swapnumbers
\theoremstyle{plain}
\newtheorem{theorem}{Theorem}[section]

\newtheorem{prop}[theorem]{Proposition}
\newtheorem{lemma}[theorem]{Lemma}
\newtheorem{corollar}[theorem]{Corollary}
\newtheorem*{Corollary}{Corollary}

\theoremstyle{definition}
\newtheorem{obs}[theorem]{Observation}
\newtheorem{definition}[theorem]{Definition}
\newtheorem*{Definition}{Definition}
\newtheorem{example}[theorem]{Example}
\newtheorem*{notation}{Notation}

\newcommand{\li}{\tilde}
\newcommand{\aaa}{\mathfrak{a}}
\newcommand{\m}{\mathfrak{m}}
\newcommand{\param}{\underline{x}}
\newcommand{\tpar}{\underline{x}^{[t]}}
\newcommand{\tparq}{\underline{x}^{[tq]}}
\newcommand{\bs}{\boldsymbol}
\newcommand{\tx}{\noindent \textbf}
\newcommand{\ld}{\ldots}
\newcommand{\cd}{\cdots}
\newcommand{\q}{^{[q]}}

\newcommand{\spec}{I^{*sp}}
\newcommand{\f}{(f_1, \ld, \hat{f_i}, \ld, f_n)}
\newcommand{\eqq}{\Leftrightarrow}
\newcommand{\fs}{(f', f_1, \ld, \hat{f_i}, \ld , f_n)}
\newcommand{\cor}{^{<q>}}
\newcommand{\ins}{I_1\cap \ld \cap \hat{I_i} \cap \ld \cap I_n}
\newcommand{\xs}{x_1, \ld, x_d}
\newcommand{\xt}{(x_1^t, \ld, x_d^t)}
\newcommand{\xtq}{(x_1^{tq}, \ld, x_d^{tq})}
\newcommand{\product}{x_1\cdots x_d}

\newcommand{\ui}{\underline{i}}
\newcommand{\und}{\underline}
\newcommand{\arr}{\Rightarrow}
\newcommand{\lar}{\longrightarrow}

\section {Introduction}

Since the inception of tight closure theory, the problem of how to
compute and analyze the tight closure of an ideal has been of paramount
importance. The prevailing sense of how the tight closure of ideal $I$
compares to $I$ is that the extra elements needed to obtain the tight closure
of $I$ are much `deeper' in the ring than the generators of $I$. The first
result of this type is due to K.E. Smith \cite{Sm}: she proved that if $R$ is a normal
finitely generated algebra over a perfect field of positive characteristic,
$I$ is a homogeneous ideal generated by forms of degrees at least $\delta$
then every $x$ in the tight closure of $I$ but not in $I$ 
must have degree at least $\delta + 1$. The second author of this
paper was able to extend this result to non-homogeneous ideals (still in
a graded normal ring as above). This was done via a canonical decomposition
of the tight closure in terms of the ideal plus another piece called the
special tight closure.
 The purpose of this paper is to prove that the tight closure of an
 arbitrary ideal in a normal ring of positive characteristic with
 perfect residue field
 can be computed as the sum between the ideal and its special tight
 closure.
 Thus, the special tight closure can be envisioned as a technique for
 computing tight closure, or rather for imposing strong restrictions 
 on the set of elements that can be in the tight closure. As an
 application, we prove that if $(R, \m )$ is an excellent normal local ring with
perfect residue field and
$\gr_{\m} R$ is reduced, then for every ideal $I\subseteq \m^k$, 
$I^*\subseteq I + \m^{k+1}$, a direct generalization of Smith's result
to the local case. A consequence is that every ideal that lies
 between $\m^k$ and $\m^{k+1}$ is tightly closed.

One would like even more precise results. For example a special case of a  theorem
due independently to N. Hara \cite{Hara} and Mehta and Srinivas \cite{MS}
states that for large characteristic,
if $R$ is a Cohen-Macaulay graded ring with an isolated singularity and $x_1,...,x_n$ are
a homogeneous system of parameters of $R$ of degrees $d_1,...,d_n$, then
$$(x_1,...,x_n)^* = (x_1,...,x_n) + R_{\geq D}$$
where $D = d_1+...+d_n$ and where $R_{\geq D}$ is the ideal generated by
all forms of degree at least $D$. (See \cite{HS} for some background information
concerning this theorem.) 
What the best possible theorem might be remains a mystery.
We begin by introducing the relevant definitions and giving some background.

\begin{definition}\label{sp} Let $(R, \m )$ be a local Noetherian ring of
characteristic $p$, $p$ prime, 
 and let $I$ be an ideal.
 We say that an element $x \in R$ is in the {\it special tight
 closure} of $I$ if there exists $c \in R^0$ and a  fixed power of 
$p$, $q_0$, such that $cx^q \in \m ^{[q/q_0]}I\q$ for all $q \ge q_0$, or 
equivalently such that $x^{q_0}\in (\m I^{[q_0]})^*$.
\end{definition}

 In studying tight closure, it is natural to restrict our attention to
 a certain class of ideals, namely those ideals that are minimal among
 ideals having the same tight closure. 

\begin{definition} Let $(R, \m)$ be local characteristic $p$ ring.
An ideal $I$ of $R$ is $*$--independent if it can be generated by
elements $f_1, \dots, f_n$ (equivalently, for every minimal system of
generators $f_1, \dots, f_n$) such that for all $i=1, \dots, n$ we
have
$f_i \notin (f_1, \dots, f_{i-1}, f_{i+1}, \dots, f_n)^*$.
\end{definition}

 We note the following properties of special tight closure:
\begin{prop}\label{property} For a local ring
  $(R, \m)$ and an arbitrary ideal $I$,
the following hold:

\begin{enumerate} 
\item $\m I\subset I^{*sp}\subset I^*$.
\item If $I$ is $*$--independent, $I^{*sp}\cap I=\m I$.
\end{enumerate}
\end{prop}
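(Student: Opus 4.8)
The plan is to get (1) immediately from the two equivalent descriptions of $\spec$, and to prove the nontrivial inclusion in (2) by reducing it to a statement about a single minimal generator and then invoking $*$-independence.

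\smallskip
\noindent\textbf{Part (1).} If $x\in\m I$ then $x^q\in\m\q I\q\subseteq\m^{[q/q_0]}I\q$ for every choice of $q_0$ (since $\m\q\subseteq\m^{[q/q_0]}$ when $q\ge q_0$), so $x\in\spec$ with $c=1$; equivalently $x^{q_0}\in(\m I)^{[q_0]}\subseteq(\m I^{[q_0]})^{*}$. This gives $\m I\subseteq\spec$. Conversely, if $cx^q\in\m^{[q/q_0]}I\q\subseteq I\q$ for all $q\ge q_0$ with $c\in R^0$, then $x\in I^{*}$ by definition, so $\spec\subseteq I^{*}$.

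\smallskip
\noindent\textbf{Part (2).} The inclusion $\m I\subseteq\spec\cap I$ follows from (1) together with $\m I\subseteq I$. For the reverse, let $x\in\spec\cap I$ and suppose $x\notin\m I$. Fix a minimal system of generators $f_1,\dots,f_n$ of $I$ and write $x=\sum a_if_i$; since $x\notin\m I$ some $a_i$, say $a_1$, is a unit, whence $x,f_2,\dots,f_n$ is again a minimal generating set of $I$, and $*$-independence (applied to this generating set) yields $x\notin(f_2,\dots,f_n)^{*}$. Thus everything comes down to the following key claim: \emph{if $x\in\spec$ and $x,f_2,\dots,f_n$ is a minimal generating set of $I$, then $x\in(f_2,\dots,f_n)^{*}$}, which contradicts the previous sentence and forces $x\in\m I$.

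\smallskip
\noindent\textbf{The key claim and the main obstacle.} Rewriting the membership $x\in\spec$ using $\m^{[q/q_0]}I\q=\m^{[q/q_0]}x^q+\m^{[q/q_0]}(f_2,\dots,f_n)\q$, for each $q\ge q_0$ we obtain an element $m_q\in\m^{[q/q_0]}$ with $(c-m_q)x^q\in(f_2,\dots,f_n)\q$; equivalently $c\in\big((f_2,\dots,f_n)\q:x^q\big)+\m^{[q/q_0]}$ for all $q\ge q_0$. The substantive point — and the step I expect to be the main obstacle — is to upgrade this family of relations, in which the ``test element'' $c-m_q$ varies with $q$ and the error term $m_q$ lies in the ever-smaller ideal $\m^{[q/q_0]}\subseteq\m^{q/q_0}$, into a single honest tight closure relation $dx^q\in(f_2,\dots,f_n)\q$ for all $q\gg 0$ with one fixed $d\in R^0$. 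I would handle this by passing to the equivalent form $x^{q_0}\in(\m I^{[q_0]})^{*}$, working inside the fixed quotient $R/(f_2,\dots,f_n)^{[q_0]}$, and applying Krull's intersection theorem and the Artin--Rees lemma to absorb the perturbation $m_qx^q\in\m^{q/q_0}x^q$ (the separation property $\bigcap_n\m^n=0$ is exactly what makes the shrinking error term disappear); one can also first replace $c$ by a test element of $R$ so that the relation is available for all $q\ge 1$, which keeps the surviving coefficient $d$ out of every minimal prime. The entire difficulty is uniformity of the coefficient across $q$; once that is in hand the claim, and hence the proposition, follows.
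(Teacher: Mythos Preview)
Your treatment of Part~(1) matches the paper's exactly: both inclusions are immediate from the definition, taking $q_0=1$ for $\m I\subseteq\spec$ and noting $\m^{[q/q_0]}I\q\subseteq I\q$ for $\spec\subseteq I^*$.

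For Part~(2) the paper gives no argument at all; it simply cites Proposition~4.2 of \cite{V}. Your reduction is the natural one and is correct as far as it goes: if $x\in\spec\cap I\setminus\m I$, make $x$ part of a minimal generating set $x,f_2,\dots,f_n$, set $J=(f_2,\dots,f_n)$, and observe that $*$--independence forces $x\notin J^*$, while $x\in\spec$ gives $(c-m_q)x^q\in J\q$ with $m_q\in\m^{[q/q_0]}$ for all $q\ge q_0$. The contradiction you seek is exactly that these relations should force $x\in J^*$.

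However, the resolution you sketch for this ``main obstacle'' does not close the gap in the generality stated. First, you propose replacing $c$ by a test element, but the Proposition is asserted for an \emph{arbitrary} local ring $(R,\m)$, where test elements need not exist. Second, the Artin--Rees idea is problematic because the ideal against which you would apply it is $J\q$, which varies with $q$; the Artin--Rees constant is not uniform in $q$, so the perturbation $m_qx^q$ cannot be absorbed this way. Third, Krull's intersection theorem tells you $\bigcap_q\m^{[q/q_0]}=0$, but the colon ideals $J\q:x^q$ are neither fixed nor monotone, so knowing $c\in(J\q:x^q)+\m^{[q/q_0]}$ for all $q$ does not yield a single $d\in R^0$ with $dx^q\in J\q$. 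In short, you have correctly isolated the crux and honestly flagged it as the obstacle, but the tools you name do not dispatch it; the paper defers this point entirely to \cite{V}, and a self-contained argument would require the techniques developed there.
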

\begin{proof}
 The first inclusion in 1. follows by choosing $q_0=1$ in Definition
 ~\ref{sp}, while the second inclusion follows from the definitions as
 well, since $cx^q \in m^{[q/q_0]}I\q  \Rightarrow cx^q \in
 I\q\Rightarrow x \in I^*$.
Part 2. is contained in Proposition 4.2 in \cite{V}.
\end{proof} 
 It is of considerable interest to identify situations in which $I^*$
 can be recovered from $I^{*sp}$. In Thm.4.4 in \cite{V}, it was proved that this can
 be accomplished if $R$ as the localization of a normal $\mathbb{N}$     -graded ring at
 the maximal homogeneous ideal.

 We now show that the assumption on the grading is unnecessary, thus
 the special tight closure technique for computing tight closure is
 available in a much larger class of rings.

The following result will be very useful. For a proof, see Proposition 2.4 in ~\cite{Ab}.

\begin{prop}\label{independent}
Let $(R, \m )$ be a excellent, analytically irreducible local ring of characteristic $p$, let $I$ be an ideal, and let $f\in R$. 
Assume that $f\notin I^*$; then there exists $q_0=p^{e_0}$ such that for all $q\ge q_0$ we have
$I\q :f^q \subset \m ^{[q/q_0]}$.
\end{prop}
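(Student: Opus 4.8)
The plan is to peel off a few standard reductions until the statement becomes one quantitative claim — that the $\m$-adic order of the colon ideals $I^{[q]}:f^q$ grows at least linearly in $q$ — and then to prove that claim using the rigidity of Frobenius powers together with analytic irreducibility. First I would pass to the completion: $\hat R$ is again a complete local domain, and the hypothesis survives, since if $f\in(I\hat R)^*$ then, with $c$ a test element of $R$ that stays a test element in $\hat R$ (these exist because $R$ is excellent and reduced), one gets $cf^q\in(I\hat R)^{[q]}=I^{[q]}\hat R$ for $q\gg 0$, hence $cf^q\in I^{[q]}\hat R\cap R=I^{[q]}$ and $f\in I^*$, a contradiction; and the conclusion descends, because colons of finitely generated ideals commute with the faithfully flat map $R\to\hat R$ and $\m^{[k]}\hat R\cap R=\m^{[k]}$. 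So I may assume $R$ is a complete local domain. Since $I^F\subset I^*$, from $f\notin I^*$ we get $f\notin I^F$, i.e. $f^q\notin I^{[q]}$ for every $q$, so each colon $J_q:=I^{[q]}:f^q$ is proper. Finally, if $\mu$ denotes the minimal number of generators of $\m$ then $\m^{\mu k}\subset\m^{[k]}$ (a degree-$\mu k$ monomial in a minimal generating set has some exponent $\ge k$), so it suffices to produce a constant $C$ with $\operatorname{ord}_{\m}(J_q)\ge q/C$ for all $q\gg 0$: any power $q_0=p^{e_0}\ge C\mu$ (and beyond the relevant bound) then works, since for $q\ge q_0$ one has $J_q\subset\m^{\lceil q/C\rceil}\subset\m^{\mu(q/q_0)}\subset\m^{[q/q_0]}$. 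Thus everything reduces to a linear lower bound on $\operatorname{ord}_{\m}(I^{[q]}:f^q)$.

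For the linear bound I would argue by contradiction, and this is where analytic irreducibility must enter. Fix a test element $c\in R^0$, so $f\notin I^*$ gives $cf^q\notin I^{[q]}$ for infinitely many $q$. Two structural facts drive the argument. First, Frobenius powers are spread out: $I^{[q]}=(h_1^q,\dots,h_s^q)$ has no generators of intermediate degree, so a relation $gf^q\in I^{[q]}$ forces $g$ deep into the $\m$-adic filtration unless $f$ is degenerately aligned with the $h_j$ — and that degeneracy is essentially the statement $f\in I^*$. Second, analytic irreducibility supplies (Rees, Izumi) a divisorial valuation $v$ — say on the normalization of a Noether normalization $A=k[[\param]]\subset R$ — whose filtration is linearly equivalent to the $\m$-adic one, which is exactly what prevents $J_q$ from acquiring shallow elements ``supported on extra branches,'' a genuine danger in a merely reduced local ring. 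Using these together with the multiplicativity $g\in J_q\Rightarrow g^{q'}\in J_{qq'}$ and $J_q^{[q']}\subset J_{qq'}$, one shows that a hypothetical sublinearly-deep element of some $J_q$, propagated by the Frobenius and transported through the absolute integral closure $R^+$ (to take $q$-th roots), would assemble into a single element of $R^0$ witnessing $cf^q\in I^{[q]}$ for all large $q$, contradicting $f\notin I^*$.

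The step I expect to be the genuine obstacle is this last assembly: converting the \emph{qualitative} hypothesis $f\notin I^*$ into a \emph{$q$-uniform quantitative} bound on the colon ideals. The obvious shortcuts fail. A valuation estimate of the form $v(g)+q\,v(f)\ge q\,v(I)$ is vacuous as soon as $f\in\overline I$, which is perfectly consistent with $f\notin I^*$; the product $\prod_q g_q$ of shallow witnesses diverges; and, with no hypothesis on $\gr_{\m}R$, one cannot freely compare $\m$-adic orders of powers. One is forced to interleave the Frobenius structure with a genuine finiteness argument inside the complete ring, with analytic irreducibility providing the linear control that keeps the construction from running off to infinity. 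Packaging all of this is exactly what Proposition~2.4 of~\cite{Ab} accomplishes.
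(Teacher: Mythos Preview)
The paper gives no proof of this proposition at all: it simply cites Proposition~2.4 of \cite{Ab} and moves on. So there is no argument in the paper to compare against; the only question is whether your write-up is internally sound.

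Your reductions are correct. Passing to $\hat R$ is legitimate (analytic irreducibility gives a complete local domain, excellent reduced rings have test elements that persist to the completion, and both colons and Frobenius powers behave well under the faithfully flat map $R\to\hat R$). The reformulation in terms of a linear lower bound on $\operatorname{ord}_{\m}(I^{[q]}:f^q)$ via the pigeonhole inclusion $\m^{\mu k}\subset\m^{[k]}$ is also fine.

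Where you stop short is exactly where you say you do. The second paragraph is a heuristic, not an argument: phrases like ``forces $g$ deep into the $\m$-adic filtration unless $f$ is degenerately aligned'' and ``would assemble into a single element of $R^0$ witnessing $cf^q\in I^{[q]}$'' are restatements of the desired conclusion rather than steps toward it. You correctly identify the genuine obstacle --- turning the qualitative hypothesis $f\notin I^*$ into a $q$-uniform quantitative bound --- and you correctly name the ingredients (Frobenius multiplicativity of the colons $J_q^{[q']}\subset J_{qq'}$, and the Izumi--Rees linear comparability of the $\m$-adic order with a divisorial valuation, which is exactly what analytic irreducibility buys). But you do not actually run the argument; you defer to \cite{Ab}. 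Since that is precisely what the paper does, your treatment and the paper's are, in the end, the same: both point to Aberbach for the real work.
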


\section{Main result}
\begin{theorem}
Let $(R, \m )$ be a characteristic $p$ local excellent normal ring, 
with perfect
residue field.
Then $I^*=I+I^{*sp}$ for every ideal $I$. In particular if $I$ is
$*$--independent, we have a direct sum decomposition
$$
\frac{I^*}{\m I}=\frac{I}{\m I}\oplus \frac{\spec }{\m I}.
$$
 \end{theorem}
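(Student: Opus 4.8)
The direction $I + I^{*sp} \subseteq I^*$ is immediate from Proposition~\ref{property}(1), so the whole content is the reverse inclusion: every $x \in I^*$ can be written as $x = f + g$ with $f \in I$ and $g \in I^{*sp}$. The plan is to fix a minimal generating set $f_1, \dots, f_n$ of $I$ and, given $x \in I^*$, try to subtract off an $R$-linear combination of the $f_i$ so that what remains lies in $I^{*sp}$. The natural strategy is induction on the number of generators needed, or rather on a notion of "how many $f_i$ are genuinely needed to express the $I$-part of $x$." Concretely, I would look at the set of ideals $J = (f_{i_1}, \dots, f_{i_k})$ obtained from subsets of the generators and consider the minimal such $J$ with $x \in J + I^{*sp}$; the goal is to show this minimal $J$ is zero. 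Suppose not: then $x = \sum_{j} r_j f_{i_j} + g$ with $g \in I^{*sp}$ and, by minimality of the subset, none of the coefficient ideals can be absorbed — in particular one can arrange that some coefficient, say $r_{i_1}$, is a unit (if all $r_j \in \m$, then $\sum r_j f_{i_j} \in \m I \subseteq I^{*sp}$ and we could have used the smaller subset), so after adjusting we may assume $f_{i_1} \in (f_{i_2}, \dots, f_{i_k}) + I^{*sp} + Rx$.

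The heart of the argument should be a Frobenius / colon-ideal computation. Since $R$ is excellent, normal (hence analytically irreducible after completing at $\m$ — normality plus local gives a domain, and the completion is a normal domain), Proposition~\ref{independent} applies: for an element $h \notin J^*$ there is $q_0$ with $J\q : h^q \subseteq \m^{[q/q_0]}$ for all $q \gg 0$. I would apply this with $h = f_{i_1}$ (or the relevant generator) and $J$ the ideal generated by the remaining generators in our subset, using $*$-independence to guarantee $f_{i_1} \notin J^*$. From $x \in J + I^{*sp} + Rf_{i_1}$ and $x \in I^*$, raising to $q$-th powers and multiplying by a suitable test element $c$, one gets a relation $c x^q = (\text{stuff in } J\q) + (\text{stuff witnessing special tight closure}) + a_q f_{i_1}^q$, and then $a_q$ lands in a colon ideal of the shape $J\q : f_{i_1}^q$ up to multiplying by another test element, hence in $\m^{[q/q_0]}$. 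Tracking this carefully should show that the contribution of $f_{i_1}$ to $x$ actually satisfies the defining inequality for $I^{*sp}$ (the $c x^q \in \m^{[q/q_0']} I\q$ condition), so $f_{i_1}$ was not needed — contradicting minimality. The perfect residue field hypothesis enters to control the "residual" combinations: one wants that a unit times $f_{i_1}$ modulo $\m I$ cannot be hidden, and over a perfect field one can take $q$-th roots of coefficients in $R/\m$, which is what lets the Frobenius powers of a mod-$\m$ relation be lifted back cleanly.

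The main obstacle, I expect, is the bookkeeping that converts "$x \in J + Rf_{i_1} + I^{*sp}$ together with $x \in I^*$" into a genuine special-tight-closure membership for the $f_{i_1}$-piece with a \emph{uniform} $q_0$ — one must combine three different powers-of-$p$ thresholds (one from $x \in I^*$, one from the witness for $g \in I^{*sp}$, one from Proposition~\ref{independent}) into a single $q_0'$, and keep the exponents $[q/q_0]$ from degrading. A secondary subtlety is making precise the reduction "we may assume a coefficient is a unit": this is where one genuinely uses that we chose a \emph{minimal} subset of the generators and that $\m I \subseteq I^{*sp}$, so that any all-in-$\m$ combination gets swept into the special tight closure and shrinks the subset. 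Once the minimal subset is forced to be empty, $x \in I^{*sp}$, giving $I^* \subseteq I + I^{*sp}$; the direct-sum statement for $*$-independent $I$ then follows formally by combining this with Proposition~\ref{property}(2), since $(I/\m I) \cap (I^{*sp}/\m I) = (I \cap I^{*sp})/\m I = \m I / \m I = 0$.
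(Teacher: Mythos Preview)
Your inductive setup is circular. You propose, for $x\in I^*$, to consider the minimal subset $J\subseteq\{f_1,\dots,f_n\}$ with $x\in (J)+I^{*sp}$ and then drive $J$ down to the empty set. But the existence of even \emph{one} such subset---namely the full set, giving $x\in I+I^{*sp}$---is precisely the theorem you are trying to prove, so your induction never gets off the ground. Worse, the stated goal ``minimal $J$ is empty'' would yield $I^*\subseteq I^{*sp}$, which is false: for $*$--independent $I$ one has $f_1\in I\subseteq I^*$ but $f_1\notin I^{*sp}$ by Proposition~\ref{property}(2). So both ends of your descent are wrong, and the colon-ideal computation in the middle, as sketched, does not produce a contradiction with minimality: knowing $a_q\in \m^{[q/q_0]}$ tells you something about a particular tight-closure equation, not that $f_{i_1}$ can be dropped from a decomposition you have not yet shown exists.

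The paper's argument is organized quite differently and rests on two ingredients absent from your plan. The first is Lemma~\ref{key}: because $R$ is normal, the singular locus has height $\ge 2$, so one can choose a \emph{regular sequence} $c,d$ of test elements; comparing the two equations $cf^q=\sum a_i f_i^q$ and $df^q=\sum b_i f_i^q$ and using Artin--Rees gives $cf^q\in cI\q+\m^{q/q_0}I\q$. This is what lets one write $cf^q\equiv c\,u_q f_1^q$ modulo $(I_0\q,\m^{q/q_0}f_1^q)$ for some $u_q\in R$, where $I_0=(f_2,\dots,f_n)$. The second ingredient is where perfectness genuinely enters: one checks that $u_{pq}\equiv u_q^p\pmod{\m}$ for $q\gg 0$, so the images in $R/\m$ form a Frobenius-compatible system, and perfectness of $R/\m$ produces a single $\alpha\in R$ with $\alpha^q\equiv -u_q\pmod{\m}$ for all large $q$. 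One then assumes for contradiction that $f_1\in(f+\alpha f_1,f_2,\dots,f_n)^*$ for this specific $\alpha$, applies Lemma~\ref{key} again, and obtains a multiplier of bounded order taking $f_1^q$ into $I_0\q$, contradicting $*$--independence via Proposition~\ref{independent}. Thus for each $i$ one finds $\alpha_i$ with $f_i\notin(f+\alpha_i f_i,f_1,\dots,\hat f_i,\dots,f_n)^*$, and Lemma~\ref{suf} then gives $f\in I+I^{*sp}$. Your colon-ideal idea does appear, but only at the final step; the real work is manufacturing the correct $\alpha$, and that requires the regular-sequence-of-test-elements trick and the Frobenius-compatibility of the $u_q$, neither of which your sketch contains.
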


\smallskip
Note that the second statement of the conclusion
 follows immediately from the first statement
 and the second part of ~\ref{property}.
Before beginning the proof we need several lemmas.

\begin{lemma}\label{suf}
Let $I=(f_1, \dots, f_n)$, and $f \in I^*$.
Assume that for all $i=1, \dots, n$ there exists an element $\alpha _i
\in R$ such that
$$
f_i \notin (f+\alpha _i f_i , f_1, \dots, f_{i-1}, f_{i+1}, \dots,
f_n)^*.
$$
Then $f\in I+I^{*sp}$.
\end{lemma}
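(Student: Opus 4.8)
The plan is to show that under the hypothesis of the lemma, the element $f$ can be adjusted by an element of $I$ so that what remains lies in $I^{*sp}$. The natural candidate for the correction is $f' = f + \sum_{i=1}^n \alpha_i f_i$, since then $f' \in f + I$, and it suffices to prove $f' \in I^{*sp}$. To get at the special tight closure of $I$, I would recall from Definition~\ref{sp} that membership $f' \in I^{*sp}$ amounts to finding $c \in R^0$ and a fixed power $q_0$ with $c(f')^q \in \m^{[q/q_0]}I^{[q]}$ for all $q \geq q_0$. So the goal is to produce such a colon-type containment, and the tool for that is Proposition~\ref{independent}, which converts a non-membership in tight closure into a statement $J^{[q]} : g^q \subset \m^{[q/q_0]}$.

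The key steps, in order: First, reinterpret the hypothesis $f_i \notin (f + \alpha_i f_i, f_1, \dots, \hat{f_i}, \dots, f_n)^*$ — equivalently $f_i \notin (f' - \sum_{j\neq i}\alpha_j f_j + \text{(stuff absorbed in the other generators)}, \dots)^*$; in fact the ideal $(f+\alpha_i f_i, f_1,\dots,\hat{f_i},\dots,f_n)$ equals $(f', f_1,\dots,\hat{f_i},\dots,f_n)$ because modulo $(f_1,\dots,\hat f_i,\dots,f_n)$ we have $f + \alpha_i f_i \equiv f + \sum_j \alpha_j f_j = f'$. So the hypothesis says precisely $f_i \notin (f', f_1, \dots, \hat{f_i}, \dots, f_n)^*$ for every $i$. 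Second, apply Proposition~\ref{independent} (using that $R$ is excellent, normal, hence analytically irreducible after we may assume $R$ is a domain — normal local means a domain) to each $i$: there is $q_i$ with $(f', f_1, \dots, \hat{f_i}, \dots, f_n)^{[q]} : f_i^q \subset \m^{[q/q_i]}$ for all $q \geq q_i$. Take $q_0 = \max_i q_i$. Third, since $f \in I^*$, write $c f^q \in I^{[q]}$ for a suitable $c \in R^0$ and all $q$; then $c(f')^q = c(f + \sum \alpha_i f_i)^q$, and expanding, $c(f')^q - c f^q \in (f_1^q, \dots, f_n^q)$, so $c(f')^q \in I^{[q]} = (f_1^q,\dots,f_n^q)$. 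Now I want to push each coefficient into $\m^{[q/q_0]}$. Write $c(f')^q = \sum_i a_i f_i^q$; the idea is that for each $i$, the relation shows $a_i f_i^q \in (f'^q, f_1^q, \dots, \hat{f_i^q}, \dots, f_n^q)$ — this requires knowing $(f')^q \in (f_1^q, \dots, f_n^q) + (\text{something})$, which needs a little care since $c(f')^q$, not $(f')^q$ itself, lies in $I^{[q]}$. The fix: multiply through by $c$ once more, or observe $c^2 (f')^q \in \sum_i a_i f_i^q$ with the $a_i$ chosen so that $a_i \in (f'^q, f_1^q,\dots,\widehat{f_i^q},\dots,f_n^q):f_i^q$ is not literally available — instead I should argue directly that for each fixed $i$, reducing the equation $c(f')^q = \sum_j a_j f_j^q$ modulo the ideal $(f', f_1^q, \dots, \widehat{f_i^q}, \dots, f_n^q)$ kills everything except $a_i f_i^q$ and a multiple of $(f')^q$ lying in that ideal, forcing $a_i f_i^q \in (f', f_1^q, \dots, \widehat{f_i^q}, \dots, f_n^q)$, hence $c a_i f_i^q \in ((f')^{[q]}, f_1^{[q]}, \dots)$ and then $a_i \in (f')$-adjusted colon $\subset \m^{[q/q_0]}$ up to the $c$'s. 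Fourth, conclude $c' (f')^q \in \m^{[q/q_0]} I^{[q]}$ for a fixed $c' \in R^0$ (some power of $c$), which is exactly $f' \in I^{*sp}$, so $f = f' - \sum \alpha_i f_i \in I^{*sp} + I$.

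The main obstacle I anticipate is the bookkeeping in the third and fourth steps: cleanly extracting, from a single relation $c(f')^q = \sum_i a_i f_i^q$, the conclusion that each coefficient $a_i$ lies in a small power of $\m$. The subtlety is that Proposition~\ref{independent} gives a colon containment $(f', f_1, \dots, \widehat{f_i}, \dots, f_n)^{[q]} : f_i^q \subset \m^{[q/q_0]}$, and I must manufacture, from the tight-closure relation for $f$, an actual element of that colon ideal — the presence of the multiplier $c$ and the fact that $(f')^q$ need not itself be in $I^{[q]}$ mean one has to track powers of $c$ carefully and possibly replace $c$ by $c^2$ or $c^3$, which is harmless since any such power is still in $R^0$. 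A secondary point to check is that "excellent normal local" supplies "analytically irreducible," which holds because a normal local ring is a domain and an excellent normal domain has normal, hence domain, completion — so Proposition~\ref{independent} genuinely applies.
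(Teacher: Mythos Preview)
Your approach matches the paper's exactly: set $f' = f + \sum_i \alpha_i f_i$, observe that $(f+\alpha_i f_i, f_1,\dots,\hat f_i,\dots,f_n) = (f', f_1,\dots,\hat f_i,\dots,f_n)$ so the hypothesis reads $f_i \notin (f', f_1,\dots,\hat f_i,\dots,f_n)^*$, then write $c(f')^q = \sum_j a_j f_j^q$ and conclude each $a_i \in \m^{[q/q_0]}$ via Proposition~\ref{independent}. The bookkeeping obstacle you flag does not actually arise: since $c(f')^q$ is already a multiple of $(f')^q$, the rearrangement $a_i f_i^q = c(f')^q - \sum_{j\neq i} a_j f_j^q$ immediately places $a_i$ in $\bigl((f')^q, f_1^q,\dots,\widehat{f_i^q},\dots,f_n^q\bigr):f_i^q \subset \m^{[q/q_0]}$, so $c(f')^q \in \m^{[q/q_0]}I\q$ with no need to pass to higher powers of $c$.
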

\begin{proof}
Let $f'=f+\alpha _1 f_1 + \dots +\alpha _n f_n$; clearly $f' \in I^*$.
We claim that for all $i=1, \dots, n$, we have
$$f_i \notin (f', f_1, \dots, f_{i-1}, f_{i+1}, \dots, f_n)^*.$$
Since
$$(f', f_1, \dots, f_{i-1}, f_{i+1}, \dots, f_n)=(f+\alpha _i f_i,
f_1, \dots, f_{i-1}, f_{i+1}, \dots, f_n),
$$ this follows from the hypothesis.

Let $c\in R^0$ be such that $c(f')^q \in I\q$ for all $q$, and write
$$c(f')^q=a_1f_1^q +\dots +a_nf_n^q.$$
Then there exists fixed  $q_0=p^{e_0}$ such that for all $i$, we have
$$
a_i \in ((f')^q, f_1^q, \dots, f_{i-1}^q, f_{i+1}^q, \dots, f_n^q):f_i^q
\subset \m ^{[q/q_0]},$$
where the last containment follows 
 since we have shown that \newline
$f_i \notin (f', f_1, \dots, f_{i-1}, f_{i+1}, \dots, f_n)^*$
and can apply Prop.~\ref{independent}.
\end{proof}

The next lemma is a crucial step needed in the proof of our main
theorem.

\begin{lemma}\label{key}
Let $(R, \m)$ be excellent normal local ring of positive characteristic $p$,
and let $I=(f_1, \dots, f_n)$ be an arbitrary ideal.
If $f \in I^*$, there exists a test element $c$ and a power $q_0$ of
the characteristic such that
$$
cf^q \in cI\q +\m^{q/q_0}I\q 
$$
for all $q \ge q_0$.
\end{lemma}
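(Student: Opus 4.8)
The plan is to run an induction on the number $n$ of generators of $I$, using Lemma \ref{suf} as the engine. When $n=1$, so $I=(f_1)$ and $f\in (f_1)^*$, we must produce a test element $c$ and a power $q_0$ with $cf^q\in cf_1^q R+\m^{q/q_0}f_1^q R$; if $f_1\notin(0)^*$ (i.e. $f_1$ is not nilpotent, which in a normal — hence reduced — ring means $f_1\neq 0$) this should follow from writing $cf^q=a_qf_1^q$ and applying Proposition \ref{independent} to the colon $f_1^q:f^q$ after first peeling off the part of $a_q$ that is a unit multiple; the degenerate case $f_1=0$ forces $f\in(0)^*=(0)$ and is trivial. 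For the inductive step, given $f\in I^*$ with $I=(f_1,\dots,f_n)$, I would like to invoke Lemma \ref{suf}: if for each $i$ there is an $\alpha_i$ with $f_i\notin(f+\alpha_if_i,f_1,\dots,\widehat{f_i},\dots,f_n)^*$, then $f\in I+I^{*sp}$, and from $f\in I+I^{*sp}$ — say $f=g+h$ with $g\in I$ and $h\in I^{*sp}$ — the defining property of special tight closure gives $ch^q\in\m^{q/q_0}I\q$ for a test element $c$ and suitable $q_0$, while $cg^q\in cI\q$ trivially; expanding $f^q=(g+h)^q$ over $\mathbb{F}_p$ yields exactly $cf^q\in cI\q+\m^{q/q_0}I\q$.

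So the real content is handling the case where the hypothesis of Lemma \ref{suf} fails, i.e. there is some index $i$ — say $i=n$ after relabeling — such that for every $\alpha\in R$ we have $f_n\in(f+\alpha f_n,f_1,\dots,f_{n-1})^*$. I would exploit this as follows. Taking $\alpha=0$ gives $f_n\in(f,f_1,\dots,f_{n-1})^*$; combined with $f\in(f_1,\dots,f_n)^*$ and the fact that the ideals $(f,f_1,\dots,f_{n-1})$ and $(f_1,\dots,f_n)$ then have the same tight closure (each generator of one is in the tight closure of the other), one sees $f\in(f,f_1,\dots,f_{n-1})^*=J^*$ where $J=(f_1,\dots,f_{n-1})+(f)$. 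The point is that $f_n$ can be "traded away": working modulo the structure provided by $f_n\in J^*$ for all the shifted ideals $(f+\alpha f_n,f_1,\dots,f_{n-1})$, one should be able to replace the problem for $I=(f_1,\dots,f_n)$ and $f$ by a problem with strictly fewer generators. Concretely, I expect to argue that $f\in (f_1,\dots,f_{n-1})^* + (f_n)$ or, better, to choose the generating set of $I$ so that either Lemma \ref{suf} applies or $f$ already lies in a subideal generated by $n-1$ of the elements together with $f$ itself, to which the induction hypothesis applies; then one lifts the conclusion $cf^q\in cK\q+\m^{q/q_0}K\q$ for the smaller ideal $K$ back up to $I\supseteq K$, which is immediate since $K\q\subseteq I\q$.

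The main obstacle I anticipate is precisely this reduction: making rigorous the idea that "the bad index can be eliminated." The subtlety is that $f_n\in J^*$ does not literally let us drop $f_n$ from the generating set of $I$ (it would change the ideal, not just its tight closure), so one must instead argue at the level of the $q$-th power equations $cf^q=\sum a_if_i^q$ and show that the coefficient $a_n$ of $f_n^q$ can be absorbed — its non-unit part into $\m^{q/q_0}$ via Proposition \ref{independent} applied to a colon ideal coming from $f_n\notin$ (something)$^*$, and its unit part back into a modified $c$ or into the $cI\q$ term using the hypothesis with a nonzero $\alpha$. Keeping the test element $c$ uniform across all $q$ (rather than letting it depend on $q$), and keeping $q_0$ fixed, throughout these manipulations and the induction, is the bookkeeping one has to be careful about; excellence, normality (hence analytic irreducibility after completion, so that Proposition \ref{independent} is available), and the perfect residue field hypothesis are what make the uniform choices possible.
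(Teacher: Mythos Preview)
Your plan has a genuine gap, and in fact the strategy is close to circular relative to the paper's logical structure. In the paper, Lemma~\ref{key} is proved \emph{first}, and then the main theorem verifies the hypothesis of Lemma~\ref{suf} (the existence of the $\alpha_i$) by using Lemma~\ref{key} in an essential way. You are proposing to run this in reverse: assume the hypothesis of Lemma~\ref{suf} holds and deduce Lemma~\ref{key}, then somehow handle the ``bad'' case by an induction on $n$. But the bad case --- some $i$ with $f_i\in(f+\alpha f_i,f_1,\dots,\widehat{f_i},\dots,f_n)^*$ for every $\alpha$ --- is exactly the situation the main theorem spends its effort ruling out, and it does so by invoking Lemma~\ref{key}. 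Your reduction ``trade $f_n$ away and apply induction'' is not made precise, and you yourself flag that absorbing the coefficient $a_n$ into $cR+\m^{q/q_0}$ is the real problem; nothing in your outline supplies a mechanism for that splitting. Even the base case $n=1$ is not correct as written: from $cf^q=a_qf_1^q$ there is no reason for $a_q$ to lie in $cR+\m^{q/q_0}$, and Proposition~\ref{independent} applied to any colon you can form here does not produce such a decomposition.

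The idea you are missing is the actual use of normality. Because the singular locus has height at least two, one can choose \emph{two} test elements $c,d$ forming a regular sequence. Writing both $cf^q=\sum a_if_i^q$ and $df^q=\sum b_if_i^q$, subtracting gives $\sum(da_i-cb_i)f_i^q=0$; $*$--independence and Proposition~\ref{independent} force each $da_i-cb_i$ into $\m^{q/q_1}$, and since it also lies in $(c,d)$, Artin--Rees puts it in $\m^{q/q_2}(c,d)$. Now the regular sequence property lets you solve $a_i=v_i+cg_i$ with $v_i\in\m^{q/q_2}$, whence $cf^q=c\sum g_if_i^q+\sum v_if_i^q\in cI\q+\m^{q/q_0}I\q$. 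The whole point is that a \emph{single} test element cannot separate the ``$c$--multiple'' part of $a_i$ from the ``deep in $\m$'' part; the second test element $d$ is what makes this possible. (Incidentally, the lemma does not assume a perfect residue field --- that hypothesis enters only in the main theorem.)
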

\begin{proof}
There is no loss of generality in assuming that $f_1, \dots, f_n$ are
$*$--independent, because otherwise we can replace them by a 
$*$--independent subset, generating the same ideal up to tight
closure.

Since the ring is normal, the ideal defining its non-regular locus has
height at least two, and moreover one can choose two elements $c$, $d$
in this ideal, forming a regular sequence. Thm.6.2 in \cite{HH}
shows that we can replace $c$ and $d$ by some powers $c^n$, $d^m$ and
obtain a regular sequence consisting of test elements.

Write
$$cf^q=a_1f_1^q + \ld a_nf_n^q,\ \  \ \mathrm{and}\ \ \ \ df^q=b_1f_1^q+\ld +
b_nf_n^q,$$
with $a_1, \dots, a_n, b_1, \dots, b_n\in R$.
 Multiply the first equation by $d$ and the second one by $c$, then subtract the second equation from the first; we get:
$$
\sum_{i=1}^n (da_i-cb_i)f_i^q =0.
$$
Since $I$ is $*$--independent, this implies that there exists a $q_1$
(independent of $q$)
such that
$da_i -cb_i \in \m ^{q/q_1}\cap (c, d)$.
By the Artin-Rees lemma it follows that there exists a $q_2\ge q_1$, also
independent of $q$, such that 
$da_i -cb_i \in \m ^{q/q_2} (c, d)$, and therefore 
we can write
$da_i -cb_i=cu_i +dv_i$, with $u_i, v_i \in \m ^{q/q_2}$.
Because $c, d$ is a regular sequence on $R$, this implies that we can
write
$a_i=v_i+cg_i, \ \ b_i=-u_i +dg_i$. Recalling that $cf^q =\sum a_i f_i^q$, we
see that $c(f^q -\sum u_i f_i^q )\in \m ^{q/q_0}I\q$, which shows that
$cf^q \in cI\q +\m ^{q/q_0}I\q$.
\end{proof}

We now begin the proof of the theorem.
\begin{proof}
Let $I=(f_1, \dots, f_n)$ and $f\in I^*$. There is no loss of generality in assuming that $I$ is $*$--independent.
We show that one can find $\alpha _1, \dots, \alpha _n \in R$ such
that
the condition in Lemma ~\ref{suf} is satisfied. There is no loss of
generality in working with $i=1$. Assume by contradiction that for
every choice of $\alpha \in R$ we have $f_1\in(f+\alpha f_1, f_2,
\dots, f_n)^*$. Let $J_{\alpha }= (f+\alpha f_1, f_2,
\dots, f_n)$ and let $I_0=(f_2, \dots , f_n)$.

 Use lemma ~\ref{key} to write
\begin{equation}\label{eq1}
cf^q \equiv cu_qf_1^q \ \mathrm{mod} \ (\, I_0\q , \m ^{q/q_0}f_1^q\, )
\end{equation}
for some $u_q \in R$.

We claim that $u_{pq}\equiv (u_q)^p$ (mod $\m )$ for all $q\gg 0$.
To prove the claim, raise equation ~\ref{eq1} to the $p$th power to
obtain
\begin{equation}\label{eq2}
c^p f^{pq} \equiv c^p (u_q)^pf_1^{pq} \ \mathrm{mod} \ (\, I_0^{[pq]} , \m ^{pq/q_0}f_1^{pq}\, ).
\end{equation}
Using equation ~\ref{eq1} in which $q$ is replaced by $pq$, and then
multiplying by $c^{p-1}$, we get 
\begin{equation}\label{eq3}
c^pf^{pq}\equiv c^p u_{pq}f_1^{pq} \ \mathrm{mod} \ (\, I_0^{[pq]} ,
\m ^{pq/q_0}f_1^{pq}\, ).
\end{equation}
Comparing equations ~\ref{eq2} and ~\ref{eq3}, we get
$$
\left[ c^p(u_{pq}-(u_q)^p) -M \right] f_1^{pq} \in I_0^{pq},
$$
for all $q\gg 0$, where $M \in \m ^{pq/q_0}$.
We obtain that $$c^p(u_{pq}-(u_q)^p) -M\in I_0^{pq}:f_1^{pq}\subseteq \m^{pq/q_1}$$
for some fixed $q_1$ and all large $q$ since 
$f_1 \notin I_0^*$ (using  ~\ref{independent}).
Then there exists a fixed constant $k$ such that
 $$c^p(u_{pq}-(u_q)^p)\in \m^{pq/k}$$
 for all large $q$. 
Hence $u_{pq}-(u_q)^p\in \m^{pq/k}:c^p$ and the latter ideal is
proper for large enough $q$ since $c^p$ has finite order (it cannot be $0$).
Hence $u_{pq}-(u_q)^p \in \m $ for all $q \gg 0$.

Since the residue field $R/\m $ is perfect, we can choose $\alpha \in
 R$ such that $ \alpha ^q \equiv -u_q \ ( \mathrm{mod} \, \m)$ for all
 $q \gg 0$.

Using the assumption that $f_1 \in J_{\alpha }^*$ and Lemma
 ~\ref{key}, we get
$$
cf_1^q\in (\, I_0\q, c(f^q+\alpha ^q f_1^q), \m ^{q/q_0}f^q,
\m^{q/q_0}f_1^q\, ).
$$
Multiply by $c$ and use equation ~\ref{eq1} to obtain
$$
c^2f_1^q\in (\, I_0\q, c^2(u_q+\alpha ^q)f_1^q, \m ^{q/q_0}f_1^q\, ),
$$
and therefore we have
$$
f_1^q\left[ \, c^2\left(1-B(u_q+\alpha ^q)\right)-C\, \right] \in I_0\q,
$$
for some $B\in R$ and $C\in \m ^{q/q_0}$.

Note that the element
$c^2\left(1-B(u_q+\alpha ^q)\right)-C$ (which multiplies $f_1^q $ into
$I_0\q$) has bounded order (by the same reasoning as in the above paragraph)
 as $q\gg 0$ increases, because $u_q+\alpha
^q\in \m $ by the choice of $\alpha $. According to
contradicting the assumption that $I$ is $*$--independent. 
\end{proof}

\section{Applications}
As an application, we obtain a generalization of one of the main  results in
\cite{Sm}, showing that there is an explicit lower bound (depending on
the ideal) on the order of any
element in the tight closure of an ideal, provided that $(R, \m)$ is
complete normal and $\gr _{\m}R$ is reduced.

We wish to thank the referee for suggesting the present form of the results in 
this section.

\begin{theorem}\label{appl}
Let $(R, \m )$ be an excellent normal domain, with perfect residue
field. Assume that there exists a filtration $\mathcal{F}
= {F_k} $ consisting of $\m $-primary ideals, such that the graded ring $\gr \mathcal{ F} = \oplus F_k /F_{k+1}$
is reduced. If $I$ is any ideal such that $I\subset F_k$, then $I^*
\subset I+F_ {k+1}$.

In particular every ideal $I$ with the property that
$F_{k+1} \subset I \subset F_k$ for some integer value of $k$ is
tightly closed.

\end{theorem}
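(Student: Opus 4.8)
The main theorem gives $I^* = I + I^{*sp}$, so to prove $I^* \subseteq I + F_{k+1}$ it suffices to show that $I^{*sp} \subseteq F_{k+1}$ whenever $I \subseteq F_k$. This reduces the problem to a statement purely about the special tight closure, which I would then attack using the definition directly: if $x \in I^{*sp}$, there is a test element $c$ and a fixed $q_0$ with $cx^q \in \m^{[q/q_0]} I^{[q]}$ for all $q \geq q_0$. Since $I \subseteq F_k$, we get $I^{[q]} \subseteq F_k^{[q]} \subseteq F_{kq}$ (using that $\mathcal F$ is a filtration, so products of $q$ elements of $F_k$ land in $F_{kq}$), and similarly $\m^{[q/q_0]} \subseteq \m^{q/q_0} \subseteq F_{q/q_0}$. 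Hence $cx^q \in F_{kq + q/q_0}$ for all large $q$.

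**Extracting the order bound.** Now I want to conclude $x \in F_{k+1}$. Suppose not; then the image $\bar x$ of $x$ in $\gr \mathcal F$ is a nonzero homogeneous element of degree $\ell \leq k$. The key leverage is that $\gr\mathcal F$ is reduced, hence $\bar x^q \neq 0$ in $\gr \mathcal F$ for every $q$, i.e. $x^q \notin F_{\ell q + 1}$; equivalently $x^q$ has order \emph{exactly} $\ell q$ with respect to $\mathcal F$. On the other hand $c$ has some fixed finite order, say $c \in F_s \setminus F_{s+1}$ — actually I need to be a little careful here: I only know $c \neq 0$, so $c$ lies in $\bigcap_j F_j$ only if that intersection is zero, which holds because the $F_j$ are $\m$-primary and $R$ is a domain (Krull intersection). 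So $c$ has finite order $s$, and again since $\gr\mathcal F$ is reduced (hence a domain-like behavior on products is not automatic, but nonzero times nonzero... ) — this is the subtle point, see below. Granting that $cx^q$ has order exactly $s + \ell q$, we would need $s + \ell q \geq kq + q/q_0$, i.e. $s \geq (k-\ell)q + q/q_0 \geq q/q_0$ for all large $q$, which is absurd since $s$ is fixed. This contradiction forces $x \in F_{k+1}$, completing the proof of the first assertion; the second is then immediate since $F_{k+1} \subseteq I$ gives $I^* \subseteq I + F_{k+1} = I$.

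**The main obstacle.** The delicate step is the order computation for the product $cx^q$: I asserted $\mathrm{ord}(cx^q) = \mathrm{ord}(c) + \mathrm{ord}(x^q)$, which is the statement that $\gr\mathcal F$ has no zerodivisors killing $\bar c$ against $\bar x^q$. Reducedness of $\gr \mathcal F$ gives that it has no nilpotents but not that it is a domain, so I cannot multiply orders freely. The fix is to avoid needing the product formula in full: it is enough to know $\bar x^q \neq 0$ (reducedness) and to bound things more crudely. One clean way: localize $\gr\mathcal F$ at a minimal prime $\mathfrak p$ avoiding $\bar x$ — since $\gr \mathcal F$ is reduced, $\bar x$ is a nonzerodivisor modulo the nilradical, so there is a minimal prime $\mathfrak p$ with $\bar x \notin \mathfrak p$, and in the domain $(\gr\mathcal F)/\mathfrak p$ the product formula $\mathrm{ord} = \mathrm{ord} + \mathrm{ord}$ does hold, while $\bar c \mapsto$ something of order $\leq s$. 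Passing images through $(\gr \mathcal F)/\mathfrak p$ preserves the needed inequality $s + \ell q \geq \deg(\text{image of } cx^q) \geq kq + q/q_0$, and the contradiction survives. I would flag this localization-at-a-minimal-prime maneuver as the technical heart of the argument; everything else is bookkeeping with the filtration and the definition of $I^{*sp}$.
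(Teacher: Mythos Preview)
Your overall strategy---reduce to $I^{*sp}\subseteq F_{k+1}$ via the main theorem, then argue by contradiction using growth of $\mathcal F$-order---is exactly the paper's, and you correctly isolate the crux: reducedness of $\gr\mathcal F$ does not by itself give multiplicativity of order, so one cannot simply assert $\mathrm{ord}_{\mathcal F}(cx^q)=s+\ell q$.

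However, your proposed fix does not close the gap. Passing to $(\gr\mathcal F)/\mathfrak p$ for a minimal prime $\mathfrak p$ avoiding $\bar x$ keeps $\bar x^q$ nonzero, but gives no control over $\bar c$: nothing prevents the leading form of the test element from lying in $\mathfrak p$. If $\bar c\in\mathfrak p$, its image in the quotient is zero, and the inequality $s+\ell q\geq kq+q/q_0$ you want simply does not follow. Concretely, from $cx^q\in F_{kq+q/q_0}$ with $s<q/q_0$ one only deduces $\bar c\,\bar x^q=0$ in $\gr\mathcal F$, hence (by reducedness) $\bar c\,\bar x=0$; this says $\bar c$ is a zerodivisor on $\bar x$, which is no contradiction at all. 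Your phrase ``$\bar c\mapsto$ something of order $\leq s$'' is precisely the unjustified step.

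The paper handles this by moving the multiplicativity problem from $\gr\mathcal F$ back into $R$. The key extra observation is that reducedness of $\gr\mathcal F$ forces every $F_j$ to be integrally closed in $R$ (if $u\in F_{k-1}\setminus F_k$ then $\bar u^n\neq 0$ gives $u^n\notin F_{n(k-1)+1}$, which rules out any integral equation over $F_k$). Since $u^{q_0}\notin F_{kq_0+1}=\overline{F_{kq_0+1}}$, there is a valuation $v$ on the fraction field of $R$ with $v(u^{q_0})<v(F_{kq_0+1})$. Now $R$ is a domain, so $v(c)<\infty$ automatically---this is exactly what the minimal-prime maneuver failed to secure for $\bar c$---and valuations are genuinely multiplicative. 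Applying $v$ to $cu^{q_0q}\in (F_{kq_0+1})^{[q]}$ yields $v(c)+q\,v(u^{q_0})\geq q\,v(F_{kq_0+1})$; dividing by $q$ and letting $q\to\infty$ gives the contradiction $v(u^{q_0})\geq v(F_{kq_0+1})$.
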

\begin{proof} 

Note that the assumption that $\gr \mathcal{F}$ is reduced implies that each of the ideals $F_k$ is integrally closed. If $u \in F_{k-1}\, \backslash \,  F_k$, it follows that $u^n \notin F_{n(k-1)+1}$ for all $n \ge 1$. This shows that 
an  equation of the form $u^n +a_1u^{n-1} +\cdots a_{n-1}u+a_n=0$, with $a_i \in F_k^{i}$, is impossible (because then $a_{n-i} u^i \in F_{(n-i)k}F_{i(k-1)}\subset F_{nk -i}\subset F_{n(k-1)+1}$ for all $i=0, \ldots, n-1$).

It is sufficient to show that $I^{*sp}\subset F_{k+1}$. Assume by
contradiction that there is a $u \in I^{*sp}$, $u \notin
F_{k+1}$. The assumption that $gr \mathcal{ F} $ is reduced implies that $u^{q_0} \notin F_{kq_0 +1}$ for every $q_0$.

Choose a fixed $c$ such that there exists a $q_0$ with
$cu^{q_0q} \in \m ^{q}I^{[q_0q]} $ for all $q \gg 0$. Since the ideals $F_1 \q $ are cofinal with $m ^q$, one can re-adjust $q_0$ so that, taking into account the fact that $I^{[q_0q]}\subset (F_{kq_0})^{[q]}$, we have 
\begin{equation}\label{eq4} cu^{q_0q} \in (F_{1+kq_0})\q. \end{equation}

Since $F_{kq_0+1}$ is integrally closed, we can choose $v$ a valuation such that $v(u^{q_0} ) < v(F_{kq_0}+1)$. Apply $v$ to
equation ~\ref{eq4}:
$$
v(c)+qv\left(u^{q_0}\right) \ge q v\left(F_{1+kq_0}\right).
$$
Dividing by $q$ and taking limits, we get
$v\left( u^{q_0} \right) \ge v\left(F_{1+kq_0}\right)$, which is a contradiction.

 \end{proof}
Note that Thm.2.2 in \cite{Sm} can be recovered as a particular case
of Proposition~\ref{appl}: Smith's result assumes that $R$ is a graded  normal
finitely generated ring over a perfect field, and concludes that 
every homogeneous element $x$ in the tight closure of a homogeneous ideal $I$ generated by
forms of degree at least $\delta$, but not in the ideal itself, must
have degree at least $\delta +1$.

\begin{corollar}
Let $(R, \m )$ be an excellent Cohen-Macaulay normal local domain with minimal multiplicity, i.e. $e(R)= \mathrm{edim}(R)-\mathrm{dim}(R)+1$, and with perfect infinite residue field. 
If ${\mathrm gr}_{\m}(R)$ is reduced, then $R$ is F-rational, i.e. all parameter ideals are tightly closed.
\end{corollar}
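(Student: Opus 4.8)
The plan is to deduce this corollary from Theorem~\ref{appl} applied to the $\m$-adic filtration $\mathcal{F} = \{\m^k\}$, whose associated graded ring is $\gr_{\m}(R)$, reduced by hypothesis. By Theorem~\ref{appl}, any ideal $I$ with $\m^{k+1}\subset I\subset\m^k$ is tightly closed; the task is to show that a system of parameters $x_1,\dots,x_d$ (or rather the ideal it generates) can, after a suitable change of minimal generators, be arranged to sit between $\m^2$ and $\m$, so that the case $k=1$ of Theorem~\ref{appl} applies.

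The key steps, in order: First I would recall that under the minimal multiplicity hypothesis $e(R) = \mathrm{edim}(R) - \dim(R) + 1$, for a Cohen-Macaulay local ring the associated graded ring $\gr_{\m}(R)$ has a very restricted structure — in fact $\m^2 = (\xs)\m$ for a suitable minimal reduction $(\xs)$ of $\m$ (this is the standard characterization of minimal multiplicity, using that the residue field is infinite to guarantee the existence of a minimal reduction generated by a system of parameters). Second, given an arbitrary parameter ideal $\mathfrak{q} = (y_1,\dots,y_d)$, I would use the reducedness of $\gr_{\m}(R)$ together with Cohen-Macaulayness to reduce to the case where the $y_i$ are part of a minimal generating set of $\m$: since $\gr_{\m}(R)$ reduced forces the parameters to have $\m$-adic order exactly one and their leading forms to be a regular sequence in $\gr_{\m}(R)$, one can adjust them modulo $\m^2$ to a minimal reduction $(\xs)$ of $\m$ with $\m^2 = (\xs)\m$. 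Third, for such an ideal we then have $\m^2 \subset (\xs) \subset \m$, so Theorem~\ref{appl} with $k=1$ gives $(\xs)^* = (\xs)$. Finally, I would argue that $\mathfrak{q}^* = \mathfrak{q}$ for the original parameter ideal: since $\gr_{\m}(R)$ is reduced and the $y_i$ can be transformed to $x_i$ by an invertible change of coordinates that does not affect tight closure (the colon and Frobenius behavior is preserved), or alternatively by directly checking that $\mathfrak{q}$ itself lies between $\m^2$ and $\m$ after noting $\m^2 \subset \mathfrak{q}$ follows from $\m^2 = (\xs)\m \subset \mathfrak{q}$ once $(\xs) = \mathfrak q$ up to the relevant identification.

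The main obstacle I expect is the second step: carefully justifying that an \emph{arbitrary} system of parameters, not just a minimal reduction of $\m$, generates an ideal containing $\m^2$ (equivalently, that one may assume the parameter ideal $\mathfrak q$ satisfies $\m^2\subset\mathfrak q$). Reducedness of $\gr_{\m}(R)$ ensures each parameter has order exactly $1$ and their initial forms form a system of parameters in the graded ring, but one must pass from "initial forms are a regular sequence" to the ideal-theoretic statement $\m^2 \subset \mathfrak q$; this uses the minimal multiplicity hypothesis crucially, via $\m^2 = (\xs)\m$ for one minimal reduction, plus the fact that in a Cohen-Macaulay ring of minimal multiplicity all minimal reductions of $\m$ (equivalently all parameter ideals whose initial forms generate the irrelevant ideal up to radical in the reduced ring $\gr_\m R$) behave the same way. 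Once that containment is in hand, the rest is a direct invocation of Theorem~\ref{appl}, and one concludes that every parameter ideal is tightly closed, i.e., $R$ is F-rational.
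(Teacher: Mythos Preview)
Your proposal has a genuine gap in the second step, and you correctly flag it as the obstacle --- but it is not merely a technicality to be filled in; the claim is false. Reducedness of $\gr_{\m}(R)$ does \emph{not} force an arbitrary system of parameters to have $\m$-adic order one: if $x_1,\dots,x_d$ is a system of parameters then so is $x_1^N,\dots,x_d^N$ for any $N$, and the latter generators have order $N$. Likewise, the ideal $(x_1^N,\dots,x_d^N)$ need not contain $\m^2$ (already in a regular local ring of dimension $2$ one has $x_1x_2\notin(x_1^3,x_2^3)$). So there is no way to show directly, from Theorem~\ref{appl} alone, that every parameter ideal is tightly closed.

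The paper sidesteps this entirely by invoking a standard structural fact about F-rationality (Thm.~4.2(d) in \cite{HH}): in an excellent Cohen--Macaulay local ring it suffices to exhibit a \emph{single} tightly closed parameter ideal. One then takes $\aaa$ to be a minimal reduction of $\m$ (which exists since the residue field is infinite), and the minimal multiplicity hypothesis gives $\m^2=\m\aaa$ via the length computation
\[
e(R)=l(R/\aaa)=l(R/\m\aaa)-l(\aaa/\m\aaa)\ge 1+\mathrm{edim}(R)-\dim(R),
\]
with equality forcing $\m^2=\m\aaa$. Hence $\m^2\subset\aaa\subset\m$, and Theorem~\ref{appl} with $k=1$ finishes. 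Your first and third steps are exactly right; the missing idea is that one parameter ideal is enough.
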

\begin{proof}
According to Thm. 4.2 (d) in \cite{HH}, it is enough to show that one parameter ideal is tightly closed. Let $\aaa $ be a parameter ideal which is a minimal reduction of $\m$. The minimal multiplicity assumption implies that $\m ^2 =\m \aaa$, since
$$
e(R)=l\left( \frac{R}{\aaa}\right) = l\left(\frac{R}{\m \aaa} \right) -l\left(\frac{\aaa}{\m \aaa }\right)
\ge 1+n -d,
$$ where $d=\mathrm{dim}(R), n=\mathrm{edim}(R)=\mu (\m)$, with equality if and only if $\m ^2 =\m \aaa $.
Thus we have $\m ^2 \subset \aaa \subset \m$; according to Thm. ~\ref{appl}, this implies that $\aaa $ is tightly closed. 
\end{proof}
Continuing along the same lines yields the following Proposition, which
may be useful for the study of so-called \it big ideals \rm (an ideal is
\it big \rm if every ideal containing it is tightly closed).

\begin{prop}\label{appl2} Let $R$, $\mathcal{F}$ be as in Thm. ~\ref{appl}; assume in addition that $gr \mathcal{F}$ is a domain. Let $I$ be an ideal of $R$
such that $F_{n+2}\subseteq I\subseteq F_n$ for some $n$.
Let $f_1,
\dots, f_t, l_1+g_1, \dots , l_s+g_s$ be a set of generators for $I$,
with $f_1, \dots, f_t, g_1, \dots, g_s \in F_{n+1}$, and $l_1, \dots,
l_s \in F_n \notin F_{n+1}$.
Let $I_0$ be the ideal of $gr {\mathcal F}$ generated by the classes of $l_0, \ld , l_s$ in $F_n /F_{n+1}$.

 If $I_0$ is tightly closed in $gr \mathcal{F}$, then
$I$ is also tightly closed.
\end{prop}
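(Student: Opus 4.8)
The plan is to show that $I^{*sp}\subseteq F_{n+2}$, since by Theorem \ref{appl} (or rather its proof) we already know $I^{*sp}\subseteq F_{n+1}$, and combined with $F_{n+2}\subseteq I$ this gives $I^* = I + I^{*sp} = I$. So suppose for contradiction that $u\in I^{*sp}$ with $u\in F_{n+1}\setminus F_{n+2}$. By Definition \ref{sp} there is a test element $c$ and a power $q_0$ with $cu^q\in \m^{[q/q_0]}I^{[q]}$ for all large $q$; after re-adjusting $q_0$ as in the proof of Theorem \ref{appl} (using cofinality of $F_1^{[q]}$ with $\m^q$), I would arrange $cu^q\in (F_{n+1+1})^{[q]}\cdot(\text{something})$, but more to the point I want to track the leading form of $u$ in $\gr\mathcal{F}$. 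Write $u^* $ for the image of $u$ in $F_{n+1}/F_{n+2}$, a nonzero element of degree $n+1$ in $\gr\mathcal{F}$.

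The key step is to pass the special tight closure condition down to the associated graded ring. Since $\gr\mathcal{F}$ is a domain, a Rees-valuation / order-function argument analogous to the one in Theorem \ref{appl} should show that $u^*$ lies in the tight closure of the ideal of $\gr\mathcal{F}$ generated by the leading forms of the generators of $I$; and because the $f_i$ and $g_j$ all lie in $F_{n+1}$, their contribution to degree-$n$ leading forms is zero, so the only degree-$n$ leading forms come from the $l_j$. Concretely, the containment $cu^q\in \m^{[q/q_0]}I^{[q]}$, after dividing by $q$, passing to the valuation associated to $\gr\mathcal{F}$ (well-defined since $\gr\mathcal{F}$ is a domain, hence $\mathcal F$ is essentially a valuation-like filtration up to bounded error), forces a relation that in the graded ring says $u^*\in (I_0)^*$ up to a degree shift — here one uses that $I^{[q]}$ is generated by the $q$-th powers, whose leading forms of minimal degree $nq$ are exactly the $q$-th powers of the classes of the $l_j$. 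Since $I_0$ is tightly closed in $\gr\mathcal F$ by hypothesis, and $u^*$ has degree $n+1 > n$, while $I_0$ is generated in degree $n$, one concludes $u^*\in I_0\cdot\gr\mathcal F$ in degree $n+1$, i.e. $u\in (l_1,\dots,l_s) + F_{n+2}$. But then $u\in I + F_{n+2} = I$, contradicting $u\notin I$ (which we may assume, else there is nothing to prove).

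The main obstacle I anticipate is making rigorous the passage ``$u\in I^{*sp}$ in $R$ $\Rightarrow$ leading form of $u$ is in $(I_0)^*$ in $\gr\mathcal F$,'' because the filtration $\mathcal F$ is only assumed $\m$-primary with $\gr\mathcal F$ a domain, not $\mathfrak a$-adic, so one does not literally have a homomorphism $R\to\gr\mathcal F$. The fix is to use the order function $\mathrm{ord}_{\mathcal F}(x)=\sup\{k: x\in F_k\}$: when $\gr\mathcal F$ is a domain this is (subadditive and in fact) additive on products, hence extends to a valuation $v$ on the fraction field with $v(F_k)\ge k$, and the special tight closure containment $cu^q\in\m^{[q/q_0]}I^{[q]}$ yields $v(c)+q\,v(u)\ge (q/q_0)\cdot 1 + q\cdot v(I)$ (with $v(I)=\min_j v(\text{generators})$); dividing by $q$ and letting $q\to\infty$ gives $v(u)\ge v(I)=n$, which we already knew, so one must be more careful and instead extract the finer statement by choosing $c$ and $q_0$ optimally and comparing with a valuation $v'$ separating $u^{q_0}$ from $F_{nq_0+1}$, exactly as in the contradiction at the end of the proof of Theorem \ref{appl}. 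Getting this comparison to see the ideal $I_0$ rather than just the filtration is the delicate point; I expect it requires that $I_0$ being tightly closed in the domain $\gr\mathcal F$ lets one rule out the borderline degree-$(n+1)$ behavior that the filtration alone cannot exclude.
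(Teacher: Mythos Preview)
Your overall strategy matches the paper's exactly: reduce to showing $I^{*sp}\subseteq I$, take $u\in I^{*sp}$ with $u\in F_{n+1}\setminus F_{n+2}$ (using Theorem~\ref{appl}), show that the leading form $\overline{u}\in I_0^*=I_0$ in $\gr\mathcal{F}$, and conclude $u\in (l_1,\dots,l_s)+F_{n+2}\subseteq I$. The gap is precisely the step you flag as the ``delicate point'': you try a valuation argument modeled on Theorem~\ref{appl}, correctly observe that it only yields $v(u)\ge n$ (which you already knew), and leave the passage to $I_0^*$ unresolved. A valuation sees only the filtration, not the ideal $I_0$, so this route cannot succeed as stated.

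The paper's argument for this step is more direct and avoids valuations entirely. Write the special tight closure membership explicitly as
\[
cu^q=\sum_i r_i f_i^q+\sum_j s_j(l_j+g_j)^q=\sum_i r_i f_i^q+\sum_j s_j g_j^q+\sum_j s_j l_j^q,
\]
with $r_i,s_j\in F_{q/q_0}$. Since $f_i,g_j\in F_{n+1}$, the first two sums lie in $F_{q/q_0+(n+1)q}$, which for $q\gg 0$ is contained in $F_{\alpha+(n+1)q+1}$, where $\alpha=\mathrm{ord}_{\mathcal F}(c)$. Because $\gr\mathcal{F}$ is a \emph{domain}, the leading form $\overline{c}\,\overline{u}^q$ is a nonzero element of degree exactly $\alpha+(n+1)q$; reading the displayed equation in $F_{\alpha+(n+1)q}/F_{\alpha+(n+1)q+1}$ kills the $f_i$ and $g_j$ contributions and yields
\[
\overline{c}\,\overline{u}^q=\sum_j \beta_j\,\overline{l_j}^q
\]
in $\gr\mathcal{F}$. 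This is exactly the tight closure condition $\overline{u}\in I_0^*$ with the fixed nonzero multiplier $\overline{c}$. The domain hypothesis is what makes the leading-form map multiplicative, so the equation transfers bodily to $\gr\mathcal{F}$; no separate comparison with a valuation is needed.

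One minor framing issue: your stated target $I^{*sp}\subseteq F_{n+2}$ is stronger than what is needed and not what your argument would show. The correct target (and what the paper proves) is $I^{*sp}\subseteq I$; the contradiction at the end should be with an assumed $u\notin I$, not $u\notin F_{n+2}$.
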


\begin{proof}  It is enough to show that $I^{*sp} \subset I$.  Let $u \in I^{*sp}$. By Thm. ~\ref{appl}, we may assume that $u \in F_{n+1} \notin F_{n+2}$, and 
 as in the proof of Thm. ~\ref{appl}, we can choose
           $c\in F_{\alpha } \notin F_{\alpha +1}$ and $q_0$
                                  such that
$cu^q \in F_{q/q_0}I\q$ for all $q\ge q_0$. 

In $gr {\mathcal F}$, this yields an equation of the form 
$$
\overline{cu^q } = \beta _1 \overline{l_1}^q +\cdots +\beta _s \overline{l_s}^q
$$
where the overline represents taking the class in $F_{\alpha +q(n+1)}/F_{\alpha +q(n+1)+1}$.

Since $I_0$ is tightly closed, this implies that $\overline{u}\in I_0$, where $\overline{u}$ represents the class of $u$ in $F_{n+1}/F_{n+2}$. Thus, 
$u \equiv  a_1l_0 + \cdots + a_s l_s \, \mathrm{mod}\  F_{n+2}$, where $a_1, \ldots, a_s \in F_1$. Since $a_i l_i \equiv a_i (l_i +g_i )\, \mathrm{mod}\  F_{n+2}$, it follows that $u \in I$.

\end{proof}

Note that this proposition allows us to reduce showing that the non-homogeneous ideal $I$ is tightly closed to showing that a homogeneous ideal $I_0$ is tightly closed.

Thm.. ~\ref{appl} and Prop. ~\ref{appl2} can be applied in  particular for the  case  $F_k =\m ^k$, with the corresponding assumptions on the graded ring $gr_\m R$.

If in Prop. ~\ref{appl2} one assumes instead that $R$ is graded normal domain, with $\m $ denoting the maximal homogeneous ideal, and the filtration is $F_k=\m^k$, the same proof shows  that $I^* \subset I+I_0^{*sp}$ (remove the assumption that $I_0$ is tightly closed).

 \bigskip

We wish to thank the referee for suggesting the present form of the results in Section 3. of the paper.

\end{document}